\newlength\shlength
\newcommand\xshlongvec[2][0]{\setlength\shlength{#1pt}%
  \stackengine{-5.6pt}{$#2$}{\smash{$\kern\shlength%
    \stackengine{7.55pt}{$\mathchar"017E$}%
      {\rule{\widthof{$#2$}}{.57pt}\kern.4pt}{O}{r}{F}{F}{L}\kern-\shlength$}}%
      {O}{c}{F}{T}{S}}
\theoremstyle{definition}
\newtheorem{theorem}{Theorem}[section]
\newtheorem{lemma}[theorem]{Lemma}
\newtheorem{proposition}[theorem]{Proposition}
\newtheorem{definition}[theorem]{Definition}
\newtheorem{corollary}[theorem]{Corollary}
\newcommand{\ovl}[1]{\overline{#1}}	
\newcommand{\parn}[1]{\left( #1 \right)}
\newcommand{\conv}[1]{\mathrm{Conv}\parn{#1}}	
\newcommand{\Vr}{\overrightarrow{V_r}}
\newcommand{\Z}{{\mathbb{Z}}}
\newcommand{\R}{{\mathbb{R}}}
\newcommand{\C}{{\mathbb{C}}}
\newcommand{\N}{{\mathbb{N}}}
\newcommand\eit{e^{i\theta}} 
\newcommand\neit{e^{-i\theta}} 
\renewcommand{\Re}{\ensuremath{\mathrm{Re}}}
\renewcommand{\Im}{\ensuremath{\mathrm{Im}}}
\newcommand{\T}{\mathsf{T}}
\begin{document}

\title{The numerical range of periodic banded Toeplitz operators}
 
\author[B.A. Itz\'a-Ortiz, R.A. Mart\'inez-Avenda\~no, and H. Nakazato]{Benjam\'in A.~Itz\'a-Ortiz, Rub\'en A.~Mart\'inez-Avenda\~no and Hiroshi Nakazato}

\keywords{Numerical range, Toeplitz operators, periodic operators, banded operators}

\address{Universidad Aut\'onoma del Estado de Hidalgo}
\email{itza@uaeh.edu.mx}
\address{Instituto Tecnol\'ogico Aut\'onomo de M\'exico}
\email{ruben.martinez.avendano@gmail.com}
\address{Hirosaki University}
\email{nakahr@hirosaki-u.ac.jp}

\thanks{The second author's research is partially supported by the Asociaci\'on Mexicana de Cultura A.C}

\subjclass[2020]{47A12, 15A60}

\maketitle 

 \begin{abstract}
     We prove that the closure of the numerical range of a $(n+1)$-periodic and $(2m+1)$-banded Toeplitz operator can be expressed as the closure of the convex hull of the uncountable union of numerical ranges of certain symbol matrices.  In contrast to the periodic $3$-banded (or tridiagonal) case, we show an example of a $2$-periodic and $5$-banded Toeplitz operator such that the closure of its numerical range is not equal to the numerical range of a single finite matrix.
 \end{abstract}

\section{Introduction}
Tridiagonal operators have long been a subject of interest not only for their remarkable mathematical properties   but more recently for their applications to mathematical physics. In \cite{F99} a tridiagonal operator known as the ``hopping sign model'' was introduced and subsequently studied by many researchers 
\cite{ChN2011,Ch-W2011,Hagger2015,ChS,IO-MA-N2022}.
By following the work in \cite{Bebianoetal} (see also \cite{BS}), the first two authors showed in \cite{IM} that the numerical range of the $(n+1)$-periodic tridiagonal Toeplitz operator on $\ell^2(\N_0)$,  given by
\[
\begin{pmatrix}
b_0 & c_0 & & & & & & & &\\
a_1 & b_1 & c_1 & & & & & & & \\
    & a_{2} & b_2 & c_2 & & & & & & \\
    &       & \ddots & \ddots  & \ddots & & & & & \\
    & & & a_{n} & b_n & c_n & & & &\\
    & & & & a_{0} & b_0 & c_0 & & &\\
    & & & & & \ddots & \ddots & \ddots & &\\
    & & & & & &a_{n-1} & b_{n-1}& c_{n-1} & \\
    & & & & & & &  a_n& b_n  & c_n &\\
    & & & & & & & & \ddots &  \ddots&\ddots
\end{pmatrix},
\]
is equal to the closure of the convex hull of the uncountable union of numerical ranges of certain symbol matrices. In this paper, we extend this result to periodic banded Toepliz operators of arbitrary band. While the results in \cite{Bebianoetal} already include the case for banded Toeplitz operators of arbitrary band, they are constrained to biperiodic Toeplitz operators. Hence, our results may also be regarded as a generalization of those in \cite{Bebianoetal}. Later, in \cite{IO-MA-N2022}, the authors were able to show that the closure of the numerical range of a $(n+1)$-periodic tridiagonal operator is equal to the numerical range of a single $2(n+1)\times2(n+1)$ complex matrix. It is therefore natural to ask if a similar result would also apply for general periodic banded operators. We answer this question in the negative by exhibiting a biperiodic $5$-banded Toeplitz operator for which the closure of its numerical range cannot be the numerical range of a single complex matrix.

Also, as a consequence of the above representation of the numerical range of a periodic banded Toeplitz operator as the closure of the convex hull of the union of numerical ranges of finite matrices, we obtain the result that there exists a real binary polynomial $g(x,y)$ such that the boundary of the numerical range of a periodic banded Toeplitz operator is a subset of the set of zeroes of $g(x,y)$.

We divide this work in three sections. In Section~2 we establish the definitions needed and state and prove the main result of this paper. In Section~3 we prove a consequence of the main result; namely the existence of a real binary form such that the boundary of the numerical range of a periodic banded Toeplitz operator is contained in the zero set of such form. Finally, in Section~4 we give an example of a 2-periodic and 5-banded Toeplitz operator for which the closure of the numerical range is not the numerical range of a single matrix.

As usual, we denote by $\N_0$, $\N$, $\Z$, $\R$ and $\C$, the sets of nonnegative integers, the set of positive integers, the set of integers, the set of real numbers, and the set of complex numbers, respectively.

\section{The numerical range of a periodic banded Toeplitz operator}

For $m \in \N_0$, we consider $2m+1$ complex-valued bi-infinite sequences denoted by \[
a^{(r)}=\left(a_j^{(r)}\right)_{j \in \Z},
\]
for $r=-m, \dots, 0, \dots, m$. For $n \in \N_0$, we assume that each sequence is $(n+1)$-periodic; i.e., $a_{j}^{(r)}=a_{j+n+1}^{(r)}$ for each $r=-m, \dots, m$ and each $j \in \Z$. If $|r|>m$, then we define $a_j^{(r)}=0$ for all $j \in \Z$.

We define the operator $T : \ell^2(\Z) \to \ell^2(\Z)$ by the bi-infinite matrix with $(j,k)$ entry the number $a_{j}^{(j-k)}$; i.e., by the bi-infinite matrix 
\[
T=\left( a_{j}^{(j-k)} \right)_{j, k \in \Z}.
\]
In other words, the operator $T$ is $(2m+1)$-banded and the sequence $a^{(r)}$ appears in the $r$-th diagonal, with $r=-m, \dots, 0, \dots, m$. The operator $T$ is a $(n+1)$-periodic  and $(2m+1)$-banded Toeplitz operator on $\ell^2(\Z)$. 

We define the operator $T_+: \ell^2(\N_0) \to \ell^2(\N_0)$ as $T_+=P T_{| \ell^2(\N_0)}$, where $P$ is the orthogonal projection of $\ell^2(\Z)$ onto $\ell^2(\N_0)$, thought of as a subspace of $\ell^2(\Z)$. The matrix of the operator $T_+$ is given by
\[
T_+=\left( a_{j}^{(j-k)} \right)_{j, k \in \N_0},
\]
that is, the $(j,k)$ entry of the matrix of $T_+$ is
\begin{equation}\label{eq:def_a}
\langle T_+ e_k, e_j \rangle = a_{j}^{(j-k)}
\end{equation}
for all $j, k \in \N_0$. In expanded form,
\[
T_+= 
\begin{pmatrix}
  a_0^{(0)} & a_0^{(1)} & \dots & a_0^{(m)}& 0 & \cdots \\[5pt]
 a_1^{(-1)} & a_1^{(0)} & a_1^{(1)} & \cdots & a_1^{(m)} & 0 & \cdots &  \\[5pt]
 \vdots & & \ddots & & & \ddots \\[5pt]
 a_m^{(-m)} & \cdots & a_m^{(-1)}& a_m^{(0)}& a_m^{(1)}& \cdots &a_m^{(m)} &\cdots \\[5pt]
 0 & a_{m+1}^{(-m)}& \cdots & a_{m+1}^{(-1)}& a_{m+1}^{(0)}& a_{m+1}^{(1)} &\cdots & \cdots \\[5pt]
  \vdots & 0 & a_{m+2}^{(-m)}& \cdots & a_{m+2}^{(-1)}& a_{m+2}^{(0)}& a_{m+2}^{(1)} &\cdots \\[5pt]
   \vdots  & \vdots & 0 & a_{m+3}^{(-m)}& \cdots & a_{m+3}^{(-1)}& a_{m+3}^{(0)}&\cdots  \\[5pt]
 \vdots & & & \ddots &\ddots & & & \ddots 
\end{pmatrix}.
\]
We also will refer to $T_+$ as an $(n+1)$-periodic and $(2m+1)$-banded Toeplitz operator, acting on $\ell^2(\N_0)$.

The operators $T$ and $T_+$ defined by the matrices above are bounded, since the sum of the moduli of each row and each column is uniformly bounded (see, e.g., \cite[Example 2.3]{Kato})

 As in \cite{IM}, we introduce the symbol $\Phi(\theta)$ of the $(n+1)$-periodic and $(2m+1)$-banded Toeplitz operators $T$ and $T_+$. We shall show that the matrix $\Phi(\theta)$ with trigonometric polynomial entries in $\theta$ plays a crucial role in expressing the closure of  the numerical range $W(T_+)$ of the operator $T_+$.

\begin{definition}\label{def:symbol}
We define the symbol $\Phi$ of a $(n+1)$-periodic and $(2m+1)$-banded Toeplitz operator $T$ on $\ell^2(\Z)$ as the function $\Phi:[0, 2 \pi) \to M_{n+1}(\C)$ given as follows. For each $j,k=0, 1, 2, \dots, n$, the $(j,k)$ entry of $\Phi(\theta)$ is
\[
\sum_{u\in \Z} e^{i u \theta} \langle T e_{k+u(n+1)}, e_j \rangle. 
\]
We define the symbol $\Phi$ of a $(n+1)$-periodic and $(2m+1)$-banded Toeplitz operator $T_+$ on $\ell^2(\N_0)$ as the symbol of $T$.
\end{definition}

{\bf Remark.} We should point out that this definition of the symbol does not coincide with the symbol $T_\theta$ defined in \cite{Bebianoetal} (cf. \cite{BS}) for the case $n+1=2$, but it is simple to see that both matrices, in this case, are unitarily equivalent: $\Phi(\theta)={\rm diag}(1, e^{-i \theta}) T_{\theta} \ {\rm diag}(1, e^{i \theta})$. On the other hand, our definition of symbol, when $m=1$, coincides with that of \cite{IM}.

\vskip0.2cm

In \cite{IM}, one can see the explicit form for the symbol when $m=1$. If $m=2$, the explicit form of the symbols $\Phi(\theta)$ will be shown next. For this, consider the $(n+1)$-periodic and $5$-banded Toeplitz operator $T$ and we set $p_j:=a_j^{(-2)}$, $q_j:=a_j^{(-1)}$, $r_j=a_j^{(0)}$, $s_j:=a_j^{(1)}$, and $t_j:=a_j^{(2)}$, with $j=0,1, 2, \ldots, n$.

If $n+1=2$, then the symbol of $T$ is the $2\times 2$ matrix 
\begin{equation*}
    \Phi(\theta)=
        \begin{pmatrix}
        p_0 e^{-i \theta} +r_0 +t_0 e^{i \theta}  & q_0 e^{-i \theta}+s_0  \\ 
        q_1 +s_1 e^{i \theta}  & p_1 e^{-i \theta} +r_1 +t_1 e^{i \theta}
        \end{pmatrix}.
\end{equation*}
If $n+1=3$, then the  symbol of $T$ is the $3\times 3$ matrix 
\begin{equation*}
    \Phi(\theta)=
        \begin{pmatrix}
          r_0  & p_0 e^{-i \theta} +s_0   & q_0 e^{-i \theta} +t_0   \\
         q_1  +t_1  e^{i \theta}  & r_1 & p_1 e^{-i \theta} +s_1   \\
        s_2 e^{i \theta} +p_2 & q_2+t_2 e^{i \theta} & r_2 
     \end{pmatrix}.
 \end{equation*} 
If $n+1=4$, then the symbol of $T$ is the $4 \times 4$ matrix 
\begin{equation*}
    \Phi(\theta)=
        \begin{pmatrix}
          r_0  & s_0  & p_0 e^{-i \theta} + t_0  & q_0 e^{-i \theta}   \\
          q_1  &  r_1 & s_1 & a_1 e^{-i \theta}  +e_1      \\
          p_2 +t_2 e^{i \theta} & q_2 & r_2 & s_2 \\
          s_3 e^{i\theta}    &  p_3 +t_3 e^{i\theta} & q_3 & r_3  
    \end{pmatrix}.
    \end{equation*} 
If $n+1\geq 5$, then the symbol of $T$ is the $(n+1)\times(n+1)$ matrix
\[
\Phi(\theta)= \begin{pmatrix}
 r_0 & s_0 & t_0 & & & p_0 e^{-i \theta} & q_0 e^{-i\theta}\\
q_1 & r_1 & s_1 & t_1 & & & p_1 e^{-i \theta} \\
p_2 & q_{2} & r_2 & s_2 & t_2 & & \\
    & \ddots & \ddots &\ddots & \ddots & \ddots   & \\
& & p_{n-2} & q_{n-2} & r_{n-2} & s_{n-2} & t_{n-2}\\
t_{n-1} e^{i \theta} & & & p_{n-1} & q_{n-1} & r_{n-1} & s_{n-1} \\
s_n e^{i\theta} & t_n e^{i\theta} & & & p_n & q_n & r_n
\end{pmatrix}.
\]

Observe that, for arbitrary $n$ and $m$ the sum defining the symbols is finite: indeed, for each $(j,k)$ we are summing over the $(j,k+u(n+1))$ entries of $T$ as $u$ ranges over the integers; that is, we are looking at the $j$-th row of $T$ and there are only finitely many nonzero entries in this row for if $|j-(k+u(n+1)|>m$, then $\langle T e_{k+u(n+1)}, e_j \rangle=0$, since $T$ is $(2m+1)$-banded.

Furthermore, notice that for $|j-(k+u(n+1)|\leq m$ to hold we need $j-(k+u(n+1))$ to be in the set $\{-m,-(m-1), \dots, -1, 0, 1, \dots (m-1), m \}$; since, as $u$ ranges over $\Z$, the different values of $j-(k+u(n+1))$ vary by multiples of $n+1$, there are at most $\left\lceil \frac{2m+1}{n+1}\right\rceil$ values of $u$ for which $-m \leq j-(k+u(n+1)) \leq m$. 

In the case $n+1 \geq 2m+1$; i.e., the period is greater or equal than the length of the band, then there is at most one integer value of $u$ for which $|j-(k+u(n+1)|\leq m$ and hence we have that $\Phi(\theta)$ equals
\begin{equation}\label{eq:matrix}
\begin{pmatrix}
  a_0^{(0)} & a_0^{(1)} & \cdots  & \cdots & a_0^{(m)} & 0 & \cdots & 0 & a_0^{(-m)} \neit & \cdots & a_0^{(-2)} \neit & a_0^{(-1)} \neit \\[5pt]
  a_1^{(-1)} & a_1^{(0)} & \ddots & &  \cdots & a_1^{(m)} & 0 & \cdots & 0 & \ddots &  & a_1^{(-2)} \neit \\[5pt]
 \vdots & \ddots & \ddots & & & & & & & & \ddots & \vdots \\[5pt]
\vdots & &  & \ddots & & & & & & & 0 & a_{m-1}^{(-m)} \neit \\[5pt]
 a_m^{(-m)} & & & & a_m^{(0)}& & & & & & & 0\\[5pt]
 0 & a_{m+1}^{(-m)} & & & & a_{m+1}^{(0)}& & & & & & \vdots \\[5pt] 
 \vdots & 0 & & & & & \ddots \\[5pt]
 \vdots & & & & & & &  \ddots & & & a_{n-m-1}^{(m)}& 0\\[5pt]
 0 & \vdots & & & & & &  &  & & & a_{n-m}^{(m)}\\[5pt]
 a_{n-m+1}^{(m)} \eit & 0 \\[5pt]
 \vdots & \ddots & & & & & & & &  \ddots\\[5pt]
 a_{n-1}^{(2)} \eit & &  & \ddots & 0 & \cdots & 0 &  a_{n-1}^{(-m)}& & & a_{n-1}^{(0)} & a_{n-1}^{(1)}\\[5pt]
  a_n^{(1)} \eit & a_n^{(2)} \eit & \cdots &  & a_n^{(m)} \eit & 0 & \cdots & 0 & a_n^{(-m)}& \cdots & a_n^{(-1)} &a_n^{(0)}
\end{pmatrix}
\end{equation}

Now we define a matrix which will be useful in what follows.
  
\begin{definition}\label{def:Cmu}
Fix $s \in {\mathbb N}$, $s\geq 2$, such that $s(n+1) \geq 2m+1$ and let $\mu:=s(n+1)$. We then define a $\mu \times \mu$ matrix $C_{\mu}$ as follows. For each $j,k=0, 1, \dots, \mu-1$, the $(j,k)$ entry of $C_\mu$ is
\[
\sum_{u \in \Z} \langle T e_{k+u \mu}, e_{j} \rangle.
\]
\end{definition}

Observe that all terms in the above sum are zero, except at most one. Indeed, the above sum is formed by adding some of the terms in the $j$-th row of $T$; namely, the entries $(j,k+u\mu)$ for $u \in \Z$. But assume there are two entries in the $j$-th row inside the band, the entries $(j,k+u_1\mu)$ and $(j,k+u_2\mu)$. Then, we must have
\[
-m \leq j-(k+u_1 \mu) \leq m \quad \text{ and } \quad -m \leq j-(k+u_2 \mu) \leq m
\]
but this implies
\[
- 2m \leq (u_2-u_1) \mu \leq 2m,
\]
which, since $\mu \geq 2m+1$ this is impossible unless $u_1=u_2$.

Notice that if the sequences $a^{(r)}$, for $r=-m, \dots, 0, \dots, m$, are regarded as $\mu$-periodic, instead of $n+1$ periodic, then $C_\mu$ is just the symbol $\Phi$ evaluated at $0$, and hence it has the same form as in expression \eqref{eq:matrix} (but it is of size $\mu \times \mu$ instead of $(n+1)\times (n+1)$). Hence if we erase the last $m$ columns and $m$ rows of $C_\mu$, we obtain $T_{\mu-m}$ a compression of the Toeplitz operator $T_+$. Therefore, we have
\begin{equation}\label{eq:TsubC}
W(T_{s(n+1) -m}) \subseteq W(C_{\mu})
\end{equation}
under the given condition $s(n+1) \geq 2m+1$.  

As in \cite{IM}, for  $0 \leq p \leq n$ and $0 \leq q < s$ we define the unit vectors in $\C^{\mu}$

\begin{align*}
    f_{p,q}
    &= \frac{1}{\sqrt{s}} \sum_{u=0}^{s-1} \rho^{u q} e_{p+u(n+1)}\\
    &=\frac{1}{\sqrt{s}} (\underbrace{0, \dots, 0}_{p}, 1, \underbrace{0, \dots, 0}_{n}, \rho^q, \underbrace{0, \dots, 0}_{n},
 \rho^{2q}, \underbrace{0, \dots, 0}_{n}, \dots,
    \rho^{(s-1)q}, \underbrace{0, \dots, 0}_{n-p})^\T,
\end{align*}
where $\rho=\exp(i 2 \pi/s)$, and we define the $\mu \times \mu$ unitary matrix  $U$ by
\[
U=\begin{pmatrix} 
  \vrule & \vrule & & \vrule &\vrule & \vrule & & \vrule & & &\vrule & \vrule &  &\vrule  \\
  f_{0,0} & f_{1,0} & \dots & f_{n,0} & f_{0,1} & f_{1,1} &\dots & f_{n,1} & \dots & \dots & f_{0,s-1} & f_{1,s-1} &\dots & f_{n,s-1} \\
    \vrule & \vrule & & \vrule &\vrule & \vrule & & \vrule & & & \vrule & \vrule & & \vrule
  \end{pmatrix}.
  \]

The matrix $U$ is useful in the next proposition.

\begin{proposition}\label{prop:Cmu}
Let $n$, $s$ and $\mu$ be as above. Then $C_\mu$ is unitarily equivalent to a block diagonal matrix. More precisely,   
\[
  U^{*}C_{\mu} U=\Phi(0) \oplus \Phi(2\pi/s) \oplus \Phi(4\pi/s) \oplus \cdots \oplus \Phi(2(s-1)\pi/s).
  \]
\end{proposition}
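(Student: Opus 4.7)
The plan is to verify the claim by a direct matrix computation: I will compute the entries $\langle C_\mu f_{k,q'}, f_{j,q}\rangle$ of $U^{*} C_\mu U$ in the labeling $(p,q)\in\{0,\dots,n\}\times\{0,\dots,s-1\}$, and show they equal $\delta_{q,q'}\,\Phi(2\pi q/s)_{j,k}$, which is exactly the asserted block-diagonal form.

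First I would verify that $U$ is unitary. Since the support of $f_{p,q}$ in the standard basis lies in $\{p+u(n+1):0\le u<s\}$, for $p\ne p'$ the supports of $f_{p,q}$ and $f_{p',q'}$ are disjoint (as $p,p'\in\{0,\dots,n\}$), so $\langle f_{p,q},f_{p',q'}\rangle=0$. For $p=p'$ the inner product reduces to $\frac{1}{s}\sum_{u=0}^{s-1}\rho^{u(q'-q)}=\delta_{q,q'}$ by orthogonality of characters of $\Z/s\Z$. Hence $\{f_{p,q}\}$ is an orthonormal basis of $\C^{\mu}$.

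Next I would carry out the main calculation. Expanding from the definition,
\[
\langle C_\mu f_{k,q'},f_{j,q}\rangle=\frac{1}{s}\sum_{u,v=0}^{s-1}\rho^{uq'-vq}\,\langle C_\mu e_{k+u(n+1)},e_{j+v(n+1)}\rangle.
\]
Using Definition~\ref{def:Cmu} and the $(n+1)$-periodicity of the diagonals (which gives $a_{j+v(n+1)}^{(r)}=a_{j}^{(r)}$), each inner product rewrites as
\[
\langle C_\mu e_{k+u(n+1)},e_{j+v(n+1)}\rangle=\sum_{w\in\Z}a_{j}^{(j-k-(u-v+ws)(n+1))}.
\]
The key technical step is to re-index: for fixed $v$, as $(u,w)$ ranges over $\{0,\dots,s-1\}\times\Z$, the integer $\tilde u:=u-v+ws$ ranges bijectively over $\Z$, and since $\rho^{s}=1$ we have $\rho^{uq'}=\rho^{(\tilde u+v)q'}$. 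Thus
\[
\sum_{u=0}^{s-1}\sum_{w\in\Z}\rho^{uq'}a_{j}^{(j-k-(u-v+ws)(n+1))}=\rho^{vq'}\sum_{\tilde u\in\Z}\rho^{\tilde u q'}a_{j}^{(j-k-\tilde u(n+1))}=\rho^{vq'}\Phi(2\pi q'/s)_{j,k},
\]
where the last equality is Definition~\ref{def:symbol} with $\theta=2\pi q'/s$. Substituting back and summing over $v$ yields
\[
\langle C_\mu f_{k,q'},f_{j,q}\rangle=\frac{1}{s}\Phi(2\pi q'/s)_{j,k}\sum_{v=0}^{s-1}\rho^{v(q'-q)}=\delta_{q,q'}\,\Phi(2\pi q/s)_{j,k},
\]
which completes the proof.

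The main obstacle is the bookkeeping in the re-indexing step: correctly identifying that $(u,w)\in\{0,\dots,s-1\}\times\Z$ with the composite $\tilde u=u-v+ws\in\Z$, and noticing that the factor $\rho^{vq'}$ split off at that step is exactly what combines with $\rho^{-vq}$ in the outer sum to produce the orthogonality relation that kills the cross terms $q\ne q'$. Once this is set up, everything else is bookkeeping: unitarity of $U$ is a standard character computation, and the identification with $\Phi(2\pi q'/s)$ is immediate from its definition.
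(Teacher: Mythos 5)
Your proof is correct and follows essentially the same route as the paper: a direct computation of $\langle C_\mu f_{k,q'}, f_{j,q}\rangle$, a re-indexing that merges the inner sums into a single sum over $\Z$ identified with the entries of $\Phi(2\pi q'/s)$, and character orthogonality to kill the blocks with $q\neq q'$. Your single-step re-indexing $\tilde u = u-v+ws$ is a slightly more streamlined version of the paper's two-stage change of variables, and your explicit check that $U$ is unitary is a small addition the paper leaves implicit, but the argument is the same.
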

\begin{proof}
Let $p_1, p_2, q_1, q_2$ be integers such that $0 \leq q_1, q_2 \leq s-1$ and $0\leq p_1, p_2 \leq n$. Observe that
\begin{align*}
   \langle C_{\mu} f_{p_1, q_1}, f_{p_2, q_2} \rangle 
   & =\frac{1}{s} \sum_{u=0}^{s-1} \sum_{v=0}^{s-1} \langle C_{\mu} \rho^{u q_1} e_{p_1+u (n+1)}, \rho^{v q_2} e_{p_2+v (n+1)} \rangle \\
   &=\frac{1}{s}\sum_{u=0}^{s-1} \sum_{v=0}^{s-1}  \rho^{u q_1} \rho^{-v q_2}\langle C_{\mu} e_{p_1+u (n+1)}, e_{p_2+v (n+1)} \rangle \\ 
   &=\frac{1}{s}\sum_{u=0}^{s-1} \sum_{v=0}^{s-1}  \rho^{u q_1} \rho^{-v q_2}  \sum_{w \in {\mathbb Z}}  \langle 
      T e_{p_1 +u (n+1) +w \mu}, e_{p_2 +v (n+1)} \rangle \\
&=\frac{1}{s}\sum_{u=0}^{s-1} \sum_{v=0}^{s-1} \rho^{u q_1} \rho^{-v q_2} \sum_{w \in {\mathbb Z}}  \langle
      T e_{p_1 +(u-v) (n+1) +w \mu}, e_{p_2} \rangle,   
\end{align*}
where the last equality follows from the $(n+1)$-periodicity of $T$.

Making the change of variables $u=v+r$, we obtain
\begin{align*}
   \langle C_{\mu} f_{p_1, q_1}, f_{p_2, q_2} \rangle 
   & =\frac{1}{s}\sum_{v=0}^{s-1} \sum_{r=-v}^{-v+s-1}  \rho^{(v+r)q_1} \rho^{-v q_2}  \sum_{w \in {\mathbb Z}}  \langle
      T e_{p_1 + r (n+1) +w \mu}, e_{p_2} \rangle.
    \end{align*}
    
For each $v=0, 1, 2, \dots, s-1$, we consider the inner sum of the above expression and rewrite it as
\begin{align*}
    \sum_{r=-v}^{-v+s-1}  & \rho^{(v+r)q_1} \rho^{-v q_2}  \sum_{w \in {\mathbb Z}}   \langle
      T e_{p_1 + r (n+1) +w \mu}, e_{p_2} \rangle \\
      &= \sum_{r=-v}^{-1}  \rho^{(v+r)q_1} \rho^{-v q_2}  \sum_{w \in {\mathbb Z}}  \langle
      T e_{p_1 + r (n+1) +w \mu}, e_{p_2} \rangle + \sum_{r=0}^{-v+s-1}  \rho^{(v+r)q_1} \rho^{-v q_2}  \sum_{w \in {\mathbb Z}}  \langle
      T e_{p_1 + r (n+1) +w \mu}, e_{p_2} \rangle.
\end{align*}    
Making another change of variables in the first summand above, we obtain
\begin{align*}
    \sum_{r=-v}^{-1} & \rho^{(v+r)q_1} \rho^{-v q_2}  \sum_{w \in {\mathbb Z}}  \langle
      T e_{p_1 + r (n+1) +w \mu}, e_{p_2} \rangle + \sum_{r=0}^{-v+s-1}  \rho^{(v+r)q_1} \rho^{-v q_2}  \sum_{w \in {\mathbb Z}}  \langle
      T e_{p_1 + r (n+1) +w \mu}, e_{p_2} \rangle \\
      &= \sum_{r=s-v}^{s-1}  \rho^{(v+r-s)q_1} \rho^{-v q_2}  \sum_{w \in {\mathbb Z}}  \langle
      T e_{p_1 + (r-s) (n+1) +w \mu}, e_{p_2} \rangle + \sum_{r=0}^{-v+s-1}  \rho^{(v+r)q_1} \rho^{-v q_2}  \sum_{w \in {\mathbb Z}}  \langle
      T e_{p_1 + r (n+1) +w \mu}, e_{p_2} \rangle \\
&= \sum_{r=s-v}^{s-1}  \rho^{(v+r)q_1} \rho^{-v q_2}  \sum_{w \in {\mathbb Z}}  \langle
      T e_{p_1 + r(n+1) +(w-1) \mu}, e_{p_2} \rangle + \sum_{r=0}^{-v+s-1}  \rho^{(v+r)q_1} \rho^{-v q_2}  \sum_{w \in {\mathbb Z}}  \langle
      T e_{p_1 + r (n+1) +w \mu}, e_{p_2} \rangle \\
      &= \sum_{r=s-v}^{s-1}  \rho^{(v+r)q_1} \rho^{-v q_2}  \sum_{w \in {\mathbb Z}}  \langle
      T e_{p_1 + r(n+1) +w \mu}, e_{p_2} \rangle + \sum_{r=0}^{-v+s-1}  \rho^{(v+r)q_1} \rho^{-v q_2}  \sum_{w \in {\mathbb Z}}  \langle
      T e_{p_1 + r (n+1) +w \mu}, e_{p_2} \rangle \\
      &= \sum_{r=0}^{s-1}  \rho^{(v+r)q_1} \rho^{-v q_2}  \sum_{w \in {\mathbb Z}}  \langle
      T e_{p_1 + r(n+1) +(w+1) \mu}, e_{p_2} \rangle.
\end{align*}
    
Hence
\begin{equation}\label{eq:Cmfpq}
\begin{split}
   \langle C_{\mu} f_{p_1, q_1}, f_{p_2, q_2} \rangle 
   & = \frac{1}{s} \sum_{v=0}^{s-1} \sum_{r=0}^{s-1}  \rho^{(v+r)q_1} \rho^{-v q_2}  \sum_{w \in {\mathbb Z}}  \langle
      T e_{p_1 + r(n+1) +(w+1) \mu}, e_{p_2} \rangle \\
      &= \frac{1}{s}\sum_{w \in {\mathbb Z}}
       \sum_{r=0}^{s-1} 
       \sum_{v=0}^{s-1} \rho^{(v+r)q_1} \rho^{-v q_2}  \langle
      T e_{p_1 + r(n+1) +(w+1) \mu}, e_{p_2} \rangle  \\
      &=\frac{1}{s}\sum_{w \in {\mathbb Z}}
       \sum_{r=0}^{s-1} \rho^{r q_1}  \langle
      T e_{p_1 + r(n+1) +(w+1) \mu}, e_{p_2} \rangle 
       \sum_{v=0}^{s-1} \rho^{v(q_1-q_2)}.
       \end{split}
   \end{equation}

Now, assume $q_1 \neq q_2$. Since this implies that  $\sum_{v=0}^{s-1} \rho^{v(q_1-q_2)}=0$, equation \eqref{eq:Cmfpq} gives
\[
\langle C_{\mu} f_{p_1, q_1}, f_{p_2, q_2} \rangle =0.
\]

If $q_1=q_2$, then $\sum_{v=0}^{s-1} \rho^{v(q_1-q_2)}=s$ and hence equation \eqref{eq:Cmfpq} gives
\begin{align*}
\langle C_{\mu} f_{p_1, q_1}, f_{p_2, q_2} \rangle 
&=\sum_{w \in {\mathbb Z}}
       \sum_{r=0}^{s-1} \rho^{r q_1}  \langle
      T e_{p_1 + r(n+1) +(w+1) \mu}, e_{p_2} \rangle \\
&=\sum_{w \in {\mathbb Z}}
       \sum_{r=0}^{s-1} \rho^{r q_1}  \langle
      T e_{p_1 + (r+(w+1)s)(n+1)}, e_{p_2} \rangle \\
&=\sum_{w \in {\mathbb Z}}
       \sum_{r=0}^{s-1} \rho^{(r+(w+1)s)q_1}  \langle
      T e_{p_1 + (r+(w+1)s)(n+1)}, e_{p_2} \rangle,
\end{align*}
since $\rho^s=1$.

Observe that when $r$ ranges from $0$ to $s-1$ and $w$ ranges in $\Z$, the number $r+(w+1)s$ ranges over all the integers, and hence the previous expression can be written as
\[
\langle C_{\mu} f_{p_1, q_1}, f_{p_2, q_2} \rangle = \sum_{u \in {\mathbb Z}} \rho^{u q_1}  \langle
      T e_{p_1 + u (n+1)}, e_{p_2} \rangle,
\]
which equals the $(p_2,p_1)$ entry of the matrix $\Phi\left(\frac{2\pi q_1}{s}\right)$.

The above computations show that the matrix of $C_\mu$ with respect to the orthogonal basis $\{ f_{p,q} \, : \, 0 \leq p \leq n, \ 0  \leq q \leq s-1\}$ is block-diagonal and each of the $s$ blocks on the diagonal is of the form $\Phi\left(\frac{2\pi q}{s}\right)$, for $q=0, 1, \dots, s-1$. This proves the desired result.
\end{proof}

The proof of the next proposition follows the same reasoning as that of Corollary~2.4 in \cite{IM}. We include the details here for completeness.
 
 \begin{proposition}
Let $T_+$ be a $(n+1)$-periodic and $(2m+1)$-banded Toeplitz operator on $\ell^2(\N_0)$ and let $\Phi$ be its symbol. Then
\[
\ovl{W(T_+)}
   \subseteq \ovl{\conv{  \bigcup_{\theta\in[0,2\pi)}W(\Phi(\theta))}}.
   \]
 \end{proposition}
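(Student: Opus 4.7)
The plan is to build on the two structural ingredients already assembled in the excerpt: the containment $W(T_{s(n+1)-m}) \subseteq W(C_\mu)$ from \eqref{eq:TsubC}, and the block-diagonal decomposition $U^* C_\mu U = \bigoplus_{q=0}^{s-1} \Phi(2\pi q/s)$ from Proposition~\ref{prop:Cmu}. The bridge between these finite-dimensional facts and the numerical range of the infinite operator $T_+$ is the standard approximation by compressions to initial segments of the canonical basis of $\ell^2(\N_0)$.

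First I would recall the well-known fact that if $P_N$ denotes the orthogonal projection of $\ell^2(\N_0)$ onto the span of $\{e_0, e_1, \dots, e_{N-1}\}$, then the finite sections $T_N := P_N T_+ P_N$ satisfy $W(T_N) \subseteq W(T_{N+1})$ and $\overline{W(T_+)} = \overline{\bigcup_{N \in \N} W(T_N)}$. In particular, restricting to the subsequence $N = s(n+1) - m$ with $s \geq 2$ and $s(n+1) \geq 2m+1$ still exhausts $\overline{W(T_+)}$, so it suffices to control $W(T_{s(n+1)-m})$ for all such $s$.

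Next, for each such $s$, the containment \eqref{eq:TsubC} gives $W(T_{s(n+1)-m}) \subseteq W(C_\mu)$. Combining this with Proposition~\ref{prop:Cmu} and the elementary fact that the numerical range of a direct sum of matrices is the convex hull of the numerical ranges of the summands, I obtain
\[
W(T_{s(n+1)-m}) \subseteq W(C_\mu) = \conv{\bigcup_{q=0}^{s-1} W\bigl(\Phi(2\pi q/s)\bigr)} \subseteq \conv{\bigcup_{\theta \in [0,2\pi)} W(\Phi(\theta))}.
\]
Taking the union over all admissible $s$ and then closures on both sides yields the desired inclusion
\[
\ovl{W(T_+)} = \ovl{\bigcup_{s} W(T_{s(n+1)-m})} \subseteq \ovl{\conv{\bigcup_{\theta \in [0,2\pi)} W(\Phi(\theta))}}.
\]

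I do not anticipate a genuine obstacle here; this proposition is essentially a bookkeeping assembly of the previous two results. The only subtlety worth being careful about is that the exhaustion of $\overline{W(T_+)}$ by finite sections works along the subsequence $N = s(n+1) - m$ (rather than all $N$), which is immediate because this subsequence is cofinal in $\N$. No boundedness or normality issues appear since $T_+$ is already shown to be bounded.
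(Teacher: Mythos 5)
Your proposal is correct and follows essentially the same route as the paper: both arguments combine the containment \eqref{eq:TsubC} with the block-diagonalization of Proposition~\ref{prop:Cmu} to bound $W(T_{s(n+1)-m})$ by $\conv{\bigcup_{\theta}W(\Phi(\theta))}$, and then exhaust $\ovl{W(T_+)}$ by the monotone family of finite sections (the paper invokes Proposition~2.3 of \cite{IM} for this last step, which is exactly your cofinal-subsequence observation).
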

 \begin{proof}
 For every positive integer $s$ satisfying $(n+1)s >m$, we define the matrix $C_{(n+1)s}$ as in Definition~\ref{def:Cmu}. By Proposition~\ref{prop:Cmu}, we get 
\[
W(C_{(n+1)s})=\conv{ \bigcup_{k=0}^{s-1} W\left(\Phi\left(\tfrac{2k\pi}{s}\right)\right)}.
\]
By removing the last $m$ rows and the last $m$ columns in $C_{(n+1)s}$, we obtain  the matrix $T_{(n+1)s-m}$ and therefore, by the inclusion \eqref{eq:TsubC}, we have
 \[
 W(T_{(n+1)s -m}) \subseteq 
 \conv{ \bigcup_{k=0}^{s-1} W\left(\Phi\left(\tfrac{2k\pi}{s}\right)\right)}.
\]
Now, clearly
\[
\bigcup_{k=0}^{s-1} W\left(\Phi\left(\tfrac{2k\pi}{s}\right)\right) \subseteq 
\bigcup_{\theta\in[0,2\pi)}W(\Phi(\theta))
 \]
and thus it follows that 
\[W(T_{(n+1)s -m }) \subseteq 
\conv{  \bigcup_{\theta\in[0,2\pi)}W(\Phi(\theta))}.
\]
 Hence, since
\[
 W(T_1) \subseteq W(T_2) \subseteq W(T_3) \subseteq \dots,
 \]
 we obtain  
 \[
 \bigcup_{k=1}^{\infty} W(T_k) \subseteq 
 \conv{  \bigcup_{\theta\in[0,2\pi)}W(\Phi(\theta))}.
\]
Applying now Proposition 2.3 in \cite{IM}, we obtain the desired result. 
\end{proof}

For the next theorem, we first establish the following lemma.

\begin{lemma}\label{le:eigenvalues}
Let $n, s \in {\mathbb N}$, $s>1$, and let $\mu=s(n+1)$. For each $r=0, 1, 2, \dots, s-1$, if $\lambda$ is an
 eigenvalue of $\Phi(\tfrac{2 r \pi}{s})$ with eigenvector $\vec{v}=(v_0,v_1,\ldots,v_n) \in {\mathbb C}^{n+1}$, then $\lambda$ is an eigenvalue of the matrix $C_{\mu}$ with eigenvector
\[
\Vr=(v_0, \dots, v_n, v_0 \rho^r, \dots, v_n \rho^r, \dots, v_0 \rho^{(s-1)r}, \dots, v_n \rho^{(s-1)r}) \in \C^{\mu},
\]
where $\rho=\exp(i 2\pi/s)$. Conversely, if $\lambda$ is an eigenvalue of $C_{\mu}$, there is  $r=0, 1, 2, \dots, s-1$, such that $\lambda$ is an eigenvalue of $\Phi(\tfrac{2 r \pi}{s})$ with eigenvector $\vec{v}$ and the corresponding $\Vr$ is an eigenvector for $C_\mu$ with eigenvalue $\lambda$.
 \end{lemma}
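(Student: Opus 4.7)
The plan is to reduce everything to Proposition~\ref{prop:Cmu}, which asserts the unitary equivalence
\[
U^{*}C_{\mu} U = \Phi(0) \oplus \Phi(2\pi/s) \oplus \cdots \oplus \Phi(2(s-1)\pi/s).
\]
The eigenvalues of a block-diagonal matrix are exactly the union of the eigenvalues of the blocks, and an eigenvector of the block-diagonal matrix corresponding to an eigenvalue of the $r$-th block is the vector obtained by placing the corresponding eigenvector of that block in the $r$-th block position and zeros elsewhere. So the content of the lemma is just translating this standard fact through the explicit unitary $U$ whose columns are the vectors $f_{p,q}$.

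For the forward direction, I would start with an eigenvector $\vec{v}=(v_0,\ldots,v_n)$ of $\Phi(2r\pi/s)$ for $\lambda$, and define $\vec w\in\C^{\mu}$ to be the vector with $\vec v$ in block $r$ (positions $r(n+1),\ldots,r(n+1)+n$) and zeros elsewhere. By the unitary equivalence, $U \vec w$ is an eigenvector of $C_{\mu}$ with eigenvalue $\lambda$. The key computation is then to identify $U\vec w$ explicitly. Using the ordering of the columns of $U$, one has
\[
U\vec w = \sum_{p=0}^{n} v_p\, f_{p,r} = \frac{1}{\sqrt{s}}\sum_{p=0}^{n} v_p \sum_{u=0}^{s-1} \rho^{ur} e_{p+u(n+1)}
        = \frac{1}{\sqrt{s}}\sum_{u=0}^{s-1} \rho^{ur}\sum_{p=0}^{n} v_p\, e_{p+u(n+1)},
\]
which in coordinates is exactly $\tfrac{1}{\sqrt{s}}\Vr$. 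Since nonzero scalar multiples of eigenvectors are eigenvectors, $\Vr$ itself is an eigenvector of $C_{\mu}$ for $\lambda$.

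For the converse, the unitary equivalence $U^{*}C_{\mu}U = \bigoplus_{r=0}^{s-1}\Phi(2r\pi/s)$ immediately forces any eigenvalue $\lambda$ of $C_{\mu}$ to be an eigenvalue of at least one summand $\Phi(2r\pi/s)$, say with eigenvector $\vec v$; the forward direction then produces the asserted eigenvector $\Vr$ of $C_{\mu}$.

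I do not expect any real obstacle; the only slightly delicate point is bookkeeping, namely verifying that the column $f_{p,r}$ of $U$ is what multiplies $v_p$ when we hit $\vec w$ with $U$, and then reading off the coordinate expression of $\sum_{p} v_p f_{p,r}$ to recognize it (up to the harmless factor $1/\sqrt{s}$) as $\Vr$. If a self-contained alternative were preferred, one could avoid $U$ entirely and verify the forward direction by a direct block-computation: write $C_{\mu}$ as an $s\times s$ array of $(n+1)\times(n+1)$ blocks and use the $\mu$-periodicity used to define $C_\mu$ together with Definition~\ref{def:symbol} to see that the action of $C_\mu$ on $\Vr$ reproduces $\Phi(2r\pi/s)\vec v$ inside each block, scaled by the appropriate power of $\rho^r$. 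Either path is short.
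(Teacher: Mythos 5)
Your proposal is correct and follows essentially the same route as the paper: the converse direction is obtained, exactly as in the paper, from the spectral decomposition $\sigma(C_\mu)=\bigcup_{r}\sigma(\Phi(2r\pi/s))$ supplied by Proposition~\ref{prop:Cmu}. The only (cosmetic) difference is in the forward direction, where the paper invokes a direct ``straightforward computation'' that $C_\mu \Vr=\lambda\Vr$, while you derive the same fact by pushing the block eigenvector through the unitary $U$ and correctly identifying $\sum_{p}v_p f_{p,r}=\tfrac{1}{\sqrt{s}}\Vr$; both are sound and of comparable length.
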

\begin{proof}
A straightforward computation shows that $\Vr$ is indeed an eigenvector for the eigenvalue $\lambda$ of 
$C_{\mu}$ if $\vec{v}$ is eigenvector for the eigenvalue $\lambda$ of  $\Phi(\tfrac{2 r \pi}{s})$. 
    
For the last assertion, as a consequence of Proposition~\ref{prop:Cmu}, we have that
  \[
    \sigma(C_{\mu})=\bigcup_{r=0}^{s-1}\sigma\left(\Phi\left(\tfrac{2 r \pi}{s}\right)\right).
    \]
    Therefore, if $\lambda \in \sigma(C_{\mu})$ is given, there is $r$ such that $\lambda\in\sigma\left(\Phi(\tfrac{2r \pi}{s})\right)$. Let $\vec{v}$ be an eigenvector for $\Phi(\tfrac{2 r \pi}{s})$ corresponding to the eigenvalue $\lambda$. Then the corresponding vector $\Vr$ does the job.  
 \end{proof}

The following generalizes Theorem 2.6 in \cite{IM}.

\begin{theorem}\label{th:selfadjoint}
Let $n, m \in {\mathbb N}$ and let $T_+$ be an $(n+1)$-periodic and $(2m+1)$-banded selfadjoint Toeplitz operator on $\ell^2(\N_0)$ and let $\Phi$ be its symbol (which is a Hermitian matrix). Let $\lambda^-(\theta)$ and $\lambda^+(\theta)$ denote the 
smallest and largest eigenvalues of $\Phi(\theta)$ and set
\[
a=\min_{\theta\in[0,2\pi)}\lambda^-(\theta) \quad \text{ and } \quad b=\max_{\theta\in[0,2\pi)}\lambda^+(\theta).
\]
Then the closure of $W(T_+)$ is the closed interval
\[
\ovl{W(T_+)}=[a,b].
\]
\end{theorem}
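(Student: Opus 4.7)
The plan is to prove the two inclusions $\ovl{W(T_+)}\subseteq [a,b]$ and $[a,b]\subseteq \ovl{W(T_+)}$ separately. The first is immediate from the preceding proposition: since $T_+$ is selfadjoint, each symbol matrix $\Phi(\theta)$ is Hermitian, so $W(\Phi(\theta))=[\lambda^-(\theta),\lambda^+(\theta)]\subseteq [a,b]$, and the closed convex hull of a subfamily of $[a,b]$ remains in $[a,b]$.

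For the reverse inclusion, observe that $\ovl{W(T_+)}$ is a closed convex subset of $\R$, hence a closed interval contained in $[a,b]$. It therefore suffices to construct sequences of unit vectors in $\ell^2(\N_0)$ whose Rayleigh quotients converge to $a$ and to $b$. I describe the construction for $b$; the one for $a$ is symmetric. By continuity of $\lambda^+$ (eigenvalues of a continuously-varying Hermitian matrix depend continuously on the parameter) and compactness of the circle, pick $\theta_0$ with $\lambda^+(\theta_0)=b$, then rational approximations $\theta_l = 2\pi r_l/s_l$ with $s_l\to\infty$, so that $\lambda^+(\theta_l)\to b$.

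For each $l$, choose a unit eigenvector $v_l\in\C^{n+1}$ of $\Phi(\theta_l)$ for the eigenvalue $\lambda^+(\theta_l)$, and apply Lemma~\ref{le:eigenvalues} (with $r=r_l$) to obtain an eigenvector $\Vr\in\C^{\mu_l}$ of $C_{\mu_l}$ for the same eigenvalue, where $\mu_l=s_l(n+1)$. Since each of the $s_l$ blocks of $\Vr$ is a phase multiple of $v_l$, one has $\|\Vr\|^2=s_l$, and every coordinate of the normalized vector $w_l:=\Vr/\sqrt{s_l}$ has modulus at most $1/\sqrt{s_l}$. Define $x_l\in\ell^2(\N_0)$ by zeroing out the last $m$ coordinates of $w_l$; then $\|w_l-x_l\|^2\leq m/s_l$ and $\|x_l\|^2\geq 1-m/s_l$.

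The key observation is that the first $\mu_l-m$ rows and columns of $C_{\mu_l}$ coincide with those of the compression of $T_+$, because for $\mu_l\geq 2m+1$ the wraparound terms in $C_{\mu_l}$ live exclusively in the top-right and bottom-left $m\times m$ corners. Consequently $\langle T_+ x_l,x_l\rangle=\langle C_{\mu_l} x_l,x_l\rangle$, and combining this with $\langle C_{\mu_l} w_l,w_l\rangle=\lambda^+(\theta_l)$ and the uniform bound $\|C_{\mu_l}\|\leq \sup_\theta \|\Phi(\theta)\|$ coming from Proposition~\ref{prop:Cmu}, a Cauchy--Schwarz estimate yields $|\langle T_+ x_l,x_l\rangle - \lambda^+(\theta_l)|\leq 2\sup_\theta \|\Phi(\theta)\|\sqrt{m/s_l}$. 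Hence the Rayleigh quotient $\langle T_+ x_l,x_l\rangle/\|x_l\|^2$ converges to $b$, giving $b\in\ovl{W(T_+)}$, and the analogous argument with $\lambda^-$ delivers $a\in\ovl{W(T_+)}$. The main obstacle is this last step: one must separate the two independent error sources---the wraparound discrepancy between $C_{\mu_l}$ and the genuine compression of $T_+$, and the truncation of $w_l$ to $x_l$---and verify that both errors vanish as $s_l\to\infty$, which they do because the eigenvector $w_l$ spreads its mass evenly across $s_l$ blocks so that only an $O(1/s_l)$ fraction lives near the boundary.
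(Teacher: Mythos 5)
Your proof is correct and follows essentially the same route as the paper: approximate the extremal eigenvalues of $\Phi$ by eigenvalues of the matrices $C_{s(n+1)}$ via Lemma~\ref{le:eigenvalues}, and exploit the fact that $C_{s(n+1)}$ differs from the compression $T_{s(n+1)}$ only in the triangular corners while the lifted eigenvector spreads its mass evenly over $s$ blocks. The only cosmetic differences are that you truncate the eigenvector and use a Cauchy--Schwarz/operator-norm bound (error $O(\sqrt{m/s})$) where the paper keeps the full eigenvector and bounds the at most $m(m+1)$ nonzero corner entries directly (error $O(1/s)$), and that you obtain the easy inclusion $\ovl{W(T_+)}\subseteq[a,b]$ directly from the preceding proposition rather than citing the argument of \cite{IM}.
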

\begin{proof}
The proof follows the same argument as that of Theorem 2.6 in \cite{IM}. We include the first part since some details are different and for the sake of completeness. 
We first show that $[a,b] \subseteq \overline{W(T_+)}$. Let $s\in {\mathbb N}$ such that $\mu:=s(n+1)\geq 2m+1$. Let $\lambda_s \in \sigma(C_{s(n+1)})$. 
 Lemma~\ref{le:eigenvalues} then implies that there exists $r=0, 1, 2, \dots, s-1$ such that  $\lambda_{s} \in \sigma\left(\Phi(\frac{2 r \pi}{s})\right)$. We can choose an eigenvector 
$\vec{v}=(v_0,v_1,\ldots,v_n)$ of $\Phi(\frac{2 r \pi}{s})$ of norm $1/\sqrt{s}$. In the manner of Lemma~\ref{le:eigenvalues}, the vector $\Vr$ is an eigenvector of norm $1$ for the matrix $C_{s(n+1)}$.

Recall that $T_{s(n+1)}$ denotes the compression of the operator $T_+$ to the subspace of $\ell^2(\N_0)$ spanned by the first $\mu=s(n+1)$ vectors in the canonical basis: $\{ e_0, e_1, e_2, \dots, e_{\mu-1}\}$. We then have
\begin{align*}
    \left\langle  T_{s(n+1)} \Vr, \Vr \right\rangle &=
    \left\langle  C_{s(n+1)} \Vr, \Vr \right\rangle 
    + \left\langle \left(T_{s(n+1)}-C_{s(n+1)}\right) \Vr, \Vr \right\rangle \\
    &= \lambda_{s} + \left\langle \left(T_{s(n+1)}-C_{s(n+1)}\right) \Vr, \Vr \right\rangle.
\end{align*}

Since the real number $\left\langle  T_{s(n+1)} \Vr, \Vr  \right\rangle$ is contained in $W(T_{s(n+1)}) \subseteq W(T_+)$, we see that
\begin{equation}\label{eq:lpTmCinW}
\lambda_{s} + \left\langle \left(T_{s(n+1)}-C_{s(n+1)}\right)\Vr, \Vr \right\rangle  \in W(T_+).
\end{equation}

Observe that the entries of $C_{s(n+1)}$ and $T_{s(n+1)}$ coincide everywhere except at the ``triangular corners'' on the top-right and down-left positions. Hence $b_{j,k}$, the $(j,k)$ entry of $C_{s(n+1)}-T_{s(n+1)}$, is zero if $|j-k|<s(n+1)-m$, and therefore at most $m(m+1)$ entries of the matrix $C_{s(n+1)}-T_{s(n+1)}$ are nonzero. If we set $\Vr=(w_0, w_1, w_2, \dots, w_{\mu-1})$, and we recall that the modulus of each coordinate of $\Vr$ is equal to the modulus of some coordinate of $\vec{v}$,  we have $|w_j|\leq \| \vec{v} \, \|$, and hence
\begin{align*}
    \left| \left\langle \left(C_{s(n+1)}-T_{s(n+1)}\right) \Vr, \Vr \right\rangle  \right|
    &\leq
    \sum_{j=0}^{\mu-1}    \sum_{k=0}^{\mu-1} |b_{j,k}| \ |w_j| \ |w_k| \\
    & \leq 
    \sum_{j=0}^{\mu-1}    \sum_{k=0}^{\mu-1} |b_{j,k}| \ \| \vec{v} \,\|^2 \\
    & \leq \max\left\{|b_{j,k}| \, : \, j, k =0, 1, 2, \dots, \mu -1\right\} \ m(m+1) \ \| \vec{v} \,\|^2.
\end{align*}

Since each nonzero $b_{j,k}$ is multiple, of modulus one, of an element of the finite set $\left\{a_j^{(k)}\right\}$, and since $\| \vec{v}\|^2=\frac{1}{s}$ it follows that 
\[
    \left| \left\langle \left(C_{s(n+1)}-T_{s(n+1)}\right) \Vr, \Vr \right\rangle  \right|
    \leq \max\left\{\left|a_j^{(k)}\right| \, : \, j=0, 1, \dots, n, \ k=-m, \dots, 0, \dots, m \right\} \ \frac{m(m+1)}{s}.
\]

Hence, the previous estimate ensures that if we can find a sequence $(\lambda_s)$ which converges as $s \to \infty$, then, by expression \eqref{eq:lpTmCinW}, we would have
\begin{equation} \label{lim_in_W}
\lim_{s \to \infty}\lambda_{s} \in  \overline{W(T_+)}.
\end{equation}

We now propose two suitable sequences of eigenvalues $(\lambda_{s})$. For each $s$, let $\lambda_s^-$ and
 $\lambda_s^+$ denote the smallest and largest eigenvalues of $C_{s(n+1)}$, respectively. 
We will prove that 
\begin{equation}\label{lim=min}
     \lim_{s \to \infty}\lambda_s^-=\min_{\theta\in[0,2\pi)}\lambda^-(\theta)
\end{equation}
 where $\lambda^-(\theta)$ is the smallest eigenvalue of $\Phi(\theta)$. It will follow then, by taking the limit as
 $s \to \infty$, that $a=\min_{\theta\in[0,2\pi)}
\lambda^-(\theta)\in \overline{W(T_+)}$.

By Proposition~\ref{prop:Cmu} and  since $\lambda_s^-$ is the smallest eigenvalue of $C_{s(n+1)}$, we have 
\[
\sigma(C_{s(n+1)})=\bigcup_{r=0}^{s-1}\sigma\left(\Phi\left(\frac{2 r \pi }{s}\right)\right).
\]
Bu then $\lambda_s^-$ is not only an eigenvalue of $\Phi(\frac{2\pi r}{s})$ for some $r$ but it is, in
 fact, the smallest among the eigenvalues of all symbols $\Phi(\frac{2 r \pi}{s})$ for $r=0, 1, \dots, s-1$; i.e.,
 $\lambda^-_s=\lambda^-\left(\frac{2 r \pi}{s}\right)$ for some $r=0,1,2, \dots, s-1$.

Let 
\[ 
\lambda^-(\theta^*)=\min_{\theta\in[0,2\pi)}\lambda^-(\theta)
\]
where $\theta^*$ is the point where the minimum is reached.
Therefore $\lambda^-(\theta^*)\leq \lambda^-\left(\frac{2 r \pi}{s}\right)=\lambda_s^-$. Using the continuity of $\lambda^-(\cdot)$, for all 
$\epsilon>0$ there is $\delta>0$ such that $|\theta-\theta^*|<\delta$ implies
 $\lambda^-(\theta^*)\leq\lambda^-(\theta)<\lambda^-(\theta^*)+\epsilon$. Also, there exists $N \in \N$ such that for $s\geq N$ we may choose $0\leq j\leq s-1$ such that $\left|\frac{2 j \pi}{s} - \theta^*\right|<\delta$. Thus for all $s\geq N$, 
\[
  \lambda^-(\theta^*)\leq\lambda_s^-=\lambda^-\left(\frac{2 r \pi}{s}\right)\leq\lambda^-\left(\frac{2 j \pi}{s}\right)<\lambda^-(\theta^*)+\epsilon.
\]
This proves Equation~\eqref{lim=min}. Analogously, one can show that
\[
\lim_{s\to\infty}\lambda_s^+=\max_{\theta\in[0,2\pi)}\lambda^+(\theta),
\]
where $\lambda_s^+$ is the largest eigenvalue of $C_{s(n+1)}$ and $\lambda^+(\theta)$ is the largest eigenvalue of $\Phi(\theta)$. 

Hence, Expression~\eqref{lim_in_W} proves that
\[
  a= \min_{\theta\in[0,2\pi)}\lambda^-(\theta)=\lim_{s\to\infty}\lambda_s^-\in\overline{W(T_+)}
\]
and 
\[
  b= \max_{\theta\in[0,2\pi)}\lambda^+(\theta)=\lim_{s\to\infty}\lambda_s^+\in\overline{W(T_+)}.
\]
Convexity then shows that $[a,b]\subseteq \overline{W(T_+)}$. 

The proof of the inclusion $\overline{W(T_+)} \subseteq [a,b]$ is now exactly the same as that of Theorem 2.6 in \cite{IM} and we omit the proof.
\end{proof}


Acting as in the proof of Theorem 2.8 in \cite{IM}, 
the above Theorem~\ref{th:selfadjoint} implies the following result.

\begin{theorem}\label{th:numranT+}
Let $n, m \in \N$ and let $T_+$ be an $(n+1)$-periodic and $(2m+1)$-banded Toeplitz operator acting on $\ell^2 (\N_0)$. Let $\Phi(\theta)$ be the symbol of $T_+$. Then 
\[
   \overline{W(T_+)} =\ovl{ \conv{ \bigcup_{\theta \in [0, 2\pi)} W(\Phi(\theta))}}.
   \]
\end{theorem}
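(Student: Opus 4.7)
The plan is to use the support function characterization of closed convex sets in $\C$. The inclusion $\ovl{W(T_+)} \subseteq \ovl{\conv{\bigcup_{\theta\in[0,2\pi)} W(\Phi(\theta))}}$ has already been established in the preceding proposition, so only the reverse inclusion remains. Both sets are closed and convex subsets of $\C$ (the first by the Toeplitz--Hausdorff theorem), so it suffices to show they share the same support function $h_K(\phi) := \sup_{z\in K}\Re(e^{-i\phi}z)$ for every $\phi \in [0,2\pi)$.

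For each fixed $\phi$, set $S_\phi := \tfrac{1}{2}(e^{-i\phi}T_+ + e^{i\phi}T_+^*)$. Since the matrix of $T_+^*$ is still $(n+1)$-periodic and $(2m+1)$-banded Toeplitz, $S_\phi$ is a self-adjoint Toeplitz operator of the same type on $\ell^2(\N_0)$. A direct index computation from Definition~\ref{def:symbol}, using $(n+1)$-periodicity, shows that the symbol of $T_+^*$ is $\Phi(\theta)^*$; hence by linearity of the symbol, the symbol of $S_\phi$ is the Hermitian matrix $\Psi_\phi(\theta) := \tfrac{1}{2}(e^{-i\phi}\Phi(\theta) + e^{i\phi}\Phi(\theta)^*)$. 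Applying Theorem~\ref{th:selfadjoint} to $S_\phi$ yields
\[
\ovl{W(S_\phi)} = \left[\,\min_{\theta\in[0,2\pi)}\lambda^-_\phi(\theta),\ \max_{\theta\in[0,2\pi)}\lambda^+_\phi(\theta)\,\right],
\]
where $\lambda^\pm_\phi(\theta)$ are the smallest and largest eigenvalues of $\Psi_\phi(\theta)$.

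Write $K_1 := \ovl{W(T_+)}$ and $K_2 := \ovl{\conv{\bigcup_{\theta\in[0,2\pi)} W(\Phi(\theta))}}$. The identity $\Re(e^{-i\phi}\langle T_+ v,v\rangle) = \langle S_\phi v,v\rangle$ for every unit vector $v$ gives $h_{K_1}(\phi) = \sup W(S_\phi) = \max_\theta \lambda^+_\phi(\theta)$. For $K_2$, since support functions are unchanged by taking convex hull and closure,
\[
h_{K_2}(\phi) = \sup_{\theta \in [0,2\pi)}\ \sup_{w \in W(\Phi(\theta))} \Re(e^{-i\phi}w) = \sup_{\theta\in[0,2\pi)} \lambda^+_\phi(\theta) = \max_{\theta\in[0,2\pi)} \lambda^+_\phi(\theta),
\]
where the inner supremum computes the largest value in $W(\Psi_\phi(\theta))$, and the outer supremum is attained by continuity of $\lambda^+_\phi$ in $\theta$ and compactness. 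Therefore $h_{K_1} \equiv h_{K_2}$, and the two closed convex sets coincide.

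The only step of substance is the verification that the symbol of $T_+^*$ equals $\Phi(\theta)^*$, which is a direct but mildly tedious rewriting of indices using $(n+1)$-periodicity; everything else is a standard manipulation of support functions together with the self-adjoint result already proved in Theorem~\ref{th:selfadjoint}.
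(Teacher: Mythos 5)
Your proof is correct, and it is essentially the argument the paper intends: the paper deduces Theorem~\ref{th:numranT+} from Theorem~\ref{th:selfadjoint} ``as in the proof of Theorem 2.8 in \cite{IM},'' which is precisely the support-function reduction to the rotated self-adjoint operators $\Re(e^{-i\phi}T_+)$ that you carry out, including the verification that the symbol of $T_+^*$ is $\Phi(\theta)^*$. Your write-up simply makes explicit the steps the paper leaves to the cited reference.
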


Also, imitating the proof of Proposition 1.1 in \cite{IM} we obtain that $\ovl{W(T)}=\ovl{W(T_+)}$ and hence we have the following result.

\begin{corollary}
Let $n, m \in \N$ and let $T$ be an $(n+1)$-periodic and $(2m+1)$-banded Toeplitz operator acting on $\ell^2 (\Z)$. Let $\Phi(\theta)$ be the symbol of $T$. Then 
\[
   \overline{W(T)} =\ovl{ \conv{ \bigcup_{\theta \in [0, 2\pi)} W(\Phi(\theta))}}.
   \]
\end{corollary}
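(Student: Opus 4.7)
The plan is to reduce this corollary to Theorem~\ref{th:numranT+} via the intermediate identity $\ovl{W(T)} = \ovl{W(T_+)}$, which the authors indicate follows by imitating Proposition 1.1 in \cite{IM}. Once that identity is in hand, the right-hand side of Theorem~\ref{th:numranT+} matches the right-hand side of the corollary and the proof is complete.

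First I would establish the easy inclusion $W(T_+) \subseteq W(T)$. Any unit vector $x \in \ell^2(\N_0)$ may be viewed as a unit vector in $\ell^2(\Z)$ via the natural embedding; since then $Px = x$, the definition $T_+ = PT|_{\ell^2(\N_0)}$ gives
\[
\langle T_+ x, x \rangle = \langle P T x, x \rangle = \langle Tx, Px \rangle = \langle Tx, x \rangle,
\]
so $W(T_+) \subseteq W(T)$ and hence $\ovl{W(T_+)} \subseteq \ovl{W(T)}$.

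The main step is the reverse inclusion $\ovl{W(T)} \subseteq \ovl{W(T_+)}$, and it exploits periodicity. Let $S$ denote the bilateral shift on $\ell^2(\Z)$ with $S e_k = e_{k+1}$, and set $S_1 := S^{n+1}$. A short direct computation using $a_j^{(r)} = a_{j+n+1}^{(r)}$ shows that $S_1^{\ast} T S_1 = T$; that is, $T$ commutes with the unitary $S_1$. Consequently, for any finitely supported unit vector $x \in \ell^2(\Z)$ and any $k \geq 0$,
\[
\langle Tx, x \rangle = \langle T\, S_1^{k} x,\, S_1^{k} x \rangle.
\]
Choosing $k$ large enough that $y := S_1^{k} x$ is supported in $\N_0$ and applying the first step to $y$, we obtain $\langle Tx, x \rangle = \langle T_+ y, y \rangle \in W(T_+)$. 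Since finitely supported vectors are norm-dense in the unit sphere of $\ell^2(\Z)$ and the map $x \mapsto \langle Tx, x \rangle$ is continuous there (because $T$ is bounded), this inclusion passes to closure and yields $\ovl{W(T)} \subseteq \ovl{W(T_+)}$.

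Combining both inclusions gives $\ovl{W(T)} = \ovl{W(T_+)}$, and invoking Theorem~\ref{th:numranT+} completes the proof. The only delicate point is verifying $S_1^{\ast} T S_1 = T$, but this is forced immediately by the $(n+1)$-periodicity of the diagonal sequences $a^{(r)}$; the density argument is then routine.
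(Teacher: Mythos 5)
Your proof is correct and follows essentially the same route as the paper, which simply invokes the identity $\ovl{W(T)}=\ovl{W(T_+)}$ (deferred to Proposition 1.1 of \cite{IM}) and then applies Theorem~\ref{th:numranT+}. You have merely written out the standard shift-invariance argument ($S^{*(n+1)}TS^{n+1}=T$ plus density of finitely supported vectors) that the cited proposition encapsulates, and all of its steps check out.
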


\section{The boundary of the numerical range of a periodic banded Toeplitz operator}

In this section we shall consider the boundary of the numerical range $W(T_+)$ of the $(n+1)$-periodic and $(2m+1)$-banded Toeplitz operator $T_+$. By Theorem~\ref{th:numranT+}, every boundary point of $W(T_+)$ lies on the boundary of the set 
\[
{\mathcal O}:=\bigcup_{\theta \in [0, 2 \pi)} W(\Phi(\theta))
\]
or on one of the support lines of $\mathcal O$. Each symbol matrix $\Phi(\theta)$ is an $(n+1) \times (n+1)$  complex matrix whose entries are trigonometric polynomials in  $\theta$. So we can express $\Phi(\theta)$ as a finite sum of the form
    \[
    \Phi(\theta)= \sum_{r=-L}^L A(r) e^{i r \theta},
    \]
    for some $L \in \N$ and
    where each $(n+1)\times(n+1)$ matrix $A(r)$ does not depend on $\theta$. Hence, for each $\xi=(\xi_0, \xi_1, \dots, \xi_n) \in \C^{n+1}$, we have
    \[
    \langle \Phi(\theta) \xi, \xi \rangle =\sum_{r=-L}^{L} \langle A(r) e^{i r \theta} \xi, \xi \rangle = \sum_{r=-L}^{L} e^{i r \theta} \sum_{j=0}^n \sum_{k=0}^n a_{j,k}(r) \xi_j \ovl{\xi_k}, 
    \]
    where $a_{j,k}(r)$ is the $(j,k)$ entry of the matrix $A(r)$. Hence
\[
{\mathcal O}=\left\{ \sum_{r=-L}^L e^{i r \theta} \sum_{j=0}^{n} \sum_{j=0}^{n}  a_{j,k}(k) \xi_{j} \overline{\xi_k} :
    \xi=(\xi_0, \xi_1, \dots, \xi_{n}) \in {\mathbb C}^{n+1}, \| \xi \| =1,  0 \leq \theta < 2\pi \right\}.
    \]
    
    Under the identification of ${\mathbb C}$ with ${\mathbb R}^2$, the set
\[
\{ \xi \in {\mathbb C}^{n+1} \ : \ \| \xi \|=1 \} \times S^1 
\]
can be written as
\[ 
\{ (x_0, y_0, x_1, y_1, \dots, x_n y_n) \in \R^{2n+2} \ : \ x_0^2 + y_0^2 + x_1^2+y_1^2 + \dots x_n^2 + y_n^2 =1 \} \times \{(x, y) \in {\mathbb R}^2: x^2 +y^2 =1 \},
\]
which is an algebraic set (see, for example, \cite{Bene-Risl}). But then, ${\mathcal O}$ is the image of a real algebraic set under a polynomial map from $\R^{2n+4}$ to $\R^2$. Then, by the Tarski-Seidenberg Theorem (cf. \cite{Bene-Risl}, page 60, Theorem 2.3.4), ${\mathcal O}$ is a semi-algebraic set. Hence, the boundary of $\mathcal O$ lies on some real algebraic curve; that is, there is a non-zero real polynomial $f(x, y)$ for which every boundary point $(x, y)$ of ${\mathcal O}$ satisfies
 \[
 f(x, y)=0.
 \]
Since the set $\{ \xi \in \C^{n+1} \, : \, \| \xi \|=1 \} \times S^1$ is  compact and connected, its image ${\mathcal O}$ under 
a continuous map is also compact and connected. Since the number of connected components of a semialgebraic set is finite \cite[Theorem 2.4.5]{BoCoRo}, this implies that the boundary of $\mathcal O$ has a finite number of connected components. 

Consider the set of polynomials $f(x,y) \in \R[x,y]$ such that
\[
\partial {\mathcal O} \subseteq \{ (x,y) \in \R^2 \, : \, f(x,y)= 0 \}.
\]
By the comment above, this set is nonempty. We choose $f_0(x,y) \in \R[x,y]$, of smallest degree, satisfying 
\[
\partial {\mathcal O} \subseteq \{(x, y) \in {\mathbb R}^2:f_0(x, y)=0 \}.
\]
Clearly, since $\R[x,y]$ is a unique factorization domain, we can write $f_0(x,y)$ as
\[
f_0=f_1^{m_1} f_2^{m_2} \cdots f_k^{m_k}
\]
where $f_1, f_2, \ldots, f_k$ are the mutually inequivalent irreducible factors of $f_0$ and the numbers $m_j$ are positive integers. Since $f_0(x,y)$ is of minimum degree, clearly we must have that $m_j=1$ for all $j$.

The curve $\{(x, y) \in \R^2: f_0(x,y)=0 \}$ may have multitangents. Such a multitangent corresponds to some singular point of the dual curve of the multiplicity free curve $f_0(x, y)=0$; therefore, the number of such lines is at most finite. We denote the union of such multitangents as $g_0(x, y)=0$ using some 
polynomial $g_0(x, y) \in {\mathbb R}[x,y]$. By letting $g(x, y)=f_0(x, y) g_0(x,y)$, we have the following inclusion
\[
\partial \conv{{\mathcal O}} \subseteq \{(x, y): g(x, y) =0 \}.
\]

We have obtained the following theorem.
\begin{theorem}
Let $T_+$ be an $(n+1)$-periodic and $(2m+1)$-banded Toeplitz operator acting on $\ell^2({\mathbb N_0})$. Then there exists a nonzero real binary polynomial $g(x, y)$ such that if $ (x, y)
\in \partial W(T_+)$, then $g(x, y)=0$.
\end{theorem}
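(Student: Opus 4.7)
The plan is to assemble ingredients already laid out in the paragraphs preceding the theorem and then close the last topological gap. The decisive input is Theorem~\ref{th:numranT+}, which gives $\overline{W(T_+)} = \overline{\conv{\mathcal O}}$ for $\mathcal O := \bigcup_{\theta \in [0,2\pi)} W(\Phi(\theta))$. The preceding discussion already exhibits $\mathcal O$ as the image of the compact real algebraic set $\{\xi \in \C^{n+1} : \|\xi\|=1\} \times S^1$ under an explicit polynomial map $\R^{2n+4} \to \R^2$; by the Tarski--Seidenberg theorem, $\mathcal O$ is therefore a compact, connected, semialgebraic subset of $\R^2$. Hence there exists a nonzero polynomial $f_0 \in \R[x,y]$ of minimal degree satisfying $\partial \mathcal O \subseteq \{f_0 = 0\}$, and minimality forces $f_0$ to be multiplicity free. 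The multitangents of the curve $\{f_0 = 0\}$ correspond to singular points of its projective dual, so there are only finitely many of them, and they can be absorbed into the zero set of a single polynomial $g_0 \in \R[x,y]$. Setting $g = f_0 g_0$ yields $\partial \conv{\mathcal O} \subseteq \{g = 0\}$, exactly as the text just above the theorem records.

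The remaining step is a short topological bridge from $\partial \conv{\mathcal O}$ to $\partial W(T_+)$. I would invoke the Toeplitz--Hausdorff theorem, which guarantees that $W(T_+)$ is convex. A convex subset of $\R^2$ and its closure share both closure and interior, and therefore the same topological boundary, so $\partial W(T_+) = \partial \overline{W(T_+)}$. The same reasoning applied to $\conv{\mathcal O}$ gives $\partial \conv{\mathcal O} = \partial \overline{\conv{\mathcal O}}$. Combining these two identities with Theorem~\ref{th:numranT+} produces $\partial W(T_+) = \partial \conv{\mathcal O}$, and the inclusion $\partial W(T_+) \subseteq \{g = 0\}$ follows. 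The degenerate case in which $\conv{\mathcal O}$ lies on a line poses no problem, since then $\overline{W(T_+)}$ is contained in a line whose defining polynomial can simply be incorporated into $g$.

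The one genuinely non-routine ingredient is the claim that an irreducible plane algebraic curve has only finitely many multitangents; this is what the paragraphs above dispatch through the projective-duality argument, and I see it as the main obstacle. Everything else---the semialgebraic structure of $\mathcal O$, the minimality and multiplicity-freeness of $f_0$, and the convex-set topology used in the bridging step---is either invoked from previously established results or is elementary. Thus my task reduces to stating the assembly cleanly and checking that $g = f_0 g_0$ is nonzero, which it is since $f_0$ is nonzero by construction and $g_0$ may be taken as the constant~$1$ when $\{f_0 = 0\}$ has no multitangents at all.
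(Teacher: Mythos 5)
Your proposal is correct and follows essentially the same route as the paper: Tarski--Seidenberg to make $\mathcal O$ semialgebraic, a minimal-degree (hence multiplicity-free) polynomial $f_0$ vanishing on $\partial\mathcal O$, finiteness of multitangents via singularities of the dual curve, and $g=f_0g_0$. The only difference is that you make explicit, via Toeplitz--Hausdorff convexity, the identification $\partial W(T_+)=\partial\,\mathrm{Conv}(\mathcal O)$, which the paper handles implicitly in its opening sentence of the argument; this is a harmless (indeed welcome) clarification rather than a different proof.
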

    
 \section{A Toeplitz operator for which the closure of the numerical range is not the numerical range of a matrix} 

We examine the following example of a $2$-periodic and $5$-banded Toeplitz operator $T_+$. We set the sequences $a^{(0)}$, $a^{(-1)}$ and $a^{(-2)}$ to be the constant sequences of zeroes, we set the sequence $a^{(1)}=\left(a_j^{(1)}\right)_{j\in \Z}$ with $a_j^{(1)}=-1$ if $j$ is an even integer and $a_j^{(1)}=2$ if $j$ is an odd integer; and we set the sequence $a^{(2)}$ to be the constant sequence of ones. That is, the matrix of $T_+$ has the upper triangular form
\begin{equation}\label{ex:counter}
\begin{pmatrix}
0 & -1 & 1 &  0 & \cdots \\
0 & 0  & 2 &  1 & 0 & \cdots  \\
0 & 0  & 0 & -1 & 1 & 0 & \cdots \\
0 & 0 & 0  & 0 & 2 & 1 & 0 & \cdots \\
\vdots & 0 & 0 & 0  & 0 & -1 & 1 & 0 & \cdots \\
\vdots & \ddots &\ddots &\ddots &\ddots 
\end{pmatrix}.
\end{equation}
By Definition~\ref{def:symbol}, the symbol of $T_+$ is the $2\times 2$ matrix function
\[
\Phi(\theta)=\begin{pmatrix}
e^{i \theta} & -1 \\
2 e^{i\theta} & e^{i\theta}
\end{pmatrix}.
\]
The numerical range of $\Phi(\theta)$ is the convex set bounded by the ellipse with focii the points 
\[
e^{i\theta}+ i \sqrt{2}  e^{i \theta/2} \quad \text{ and } \quad e^{i\theta}- i \sqrt{2} e^{i \theta/2},
\]
major axis of length $3$, and minor axis of length $1$.

By Theorem~\ref{th:numranT+}, the closure of $W(T)$ is given as the closure of the convex hull of the union of the above ellipses.

Each ellipse can be parametrized as
\begin{align*}
X(t)&=\cos(\theta) + \frac12 \cos(\theta/2) \cos(t) - \frac32 \sin(\theta/2) \sin(t),\\
Y(t)&=\sin(\theta) + \frac12 \sin(\theta/2) \cos(t) + \frac32 \cos(\theta/2) \sin(t),
\end{align*}
where $t\in [0, 2\pi)$. The Cartesian equation of each ellipse is
\[
(20 +16 \cos \theta)(X -\cos \theta)^2 +(20-16 \cos \theta)(Y-\sin \theta)^2 +32\sin \theta (X -\cos \theta)(Y -\sin \theta)=9.
\]
or equivalently,
\[
(16 X^2-16 Y^2 - 40 X +16 )\cos \theta + (32 X Y -40 Y)  \sin \theta + (20 X^2 + 20 Y^2 - 32 X + 11)=0.
\]

We now consider the function
\begin{align*}
H&(X,Y;\theta) \\
=& (16 X^2-16 Y^2 - 40 X +16 )\cos \theta + (32 X Y -40 Y)  \sin \theta + (20 X^2 + 20 Y^2 - 32 X + 11).
\end{align*}

Observe that, for each $\theta \in [0, 2 \pi)$, the boundary of the numerical range of $\Phi(\theta)$ is given by the ellipse $H(X,Y;\theta)=0$. To find the envelope of this family of ellipses, we write
\[
H(X,Y;\theta) = \alpha \cos \theta + \beta \sin \theta + \gamma
\]
where
\[
\alpha = 16 X^2-16 Y^2 - 40 X +16, \quad \beta=32 X Y -40 Y, \quad \text{ and } \quad \gamma=20 X^2 + 20 Y^2 - 32 X + 11.
\]

As is well-known \cite[p. 76]{BG}, the envelope consists of those points $(X,Y)$ such that there exists $\theta \in [0,2 \pi)$ such that
\[
H(X,Y;\theta)=0 \quad \text{ and } \quad \frac{\partial}{\partial \theta} H(X,Y;\theta)=0.
\]
In other words, to find the points $(X,Y)$ in the envelope, we need to check if there exists $\theta \in [0, 2 \pi)$ such that the system
\[
\alpha \cos \theta + \beta \sin \theta + \gamma =0, \quad \text{ and } \quad 
\beta \cos \theta - \alpha \sin \theta = 0,
\]
has a solution. A sufficient and necessary condition for this system to have  a solution is that $\alpha^2 + \beta^2 = \gamma^2$; that is
\[
(16 X^2-16 Y^2 - 40 X +16)^2+(32 X Y -40 Y)^2 =(20 X^2 + 20 Y^2 - 32 X + 11)^2.
\]
Equivalently,
\begin{equation}\label{eq:quartic}
    16X^4 +32X^2 Y^2 +16Y^4 -72 X^2 -72  Y^2 +64 X-15=0.
\end{equation}

Solving for $Y$ and using the second derivative test for convexity, it is straightforward to check that the points satisfying the above equation are the boundary of a convex set, except for the isolated point $(\frac12,0)$ which lies in the interior of said set. Hence, the union of the ellipses $H(X,Y;\theta)=0$ and their interiors is a convex set. Also it is clear that the union of the family of the ellipses $H(X,Y;\theta)=0$ and their interiors is a closed set. Therefore, by Theorem~\ref{th:numranT+}, we conclude that a point $X + i Y$ is in the boundary of the numerical range of the operator $T_+$ if and only if it satisfies the equation
\[
16X^4 +32X^2 Y^2 +16Y^4 -72 X^2 -72  Y^2 +64 X-15=0,
\]
and $(X,Y)\neq (\frac{1}{2},0)$.

Hence, the boundary of $W(T_+)$ consists of the points described by the set
\[
\left\{ (X, Y) \in {\mathbb R}^2: L(1, X, Y)=0 \right\} \setminus \left\{ (\tfrac{1}{2},0) \right\}
\]
where 
\begin{equation}\label{eq:L}
L(U, X, Y)=16X^4 +32X^2 Y^2 +16Y^4 -72U^2 X^2 -72 U^2 Y^2 +64U^3 X-15 U^4,
\end{equation}
see Figure \ref{fig:numrange}.

\begin{figure}\label{fig:numrange}    \includegraphics[scale=0.4]{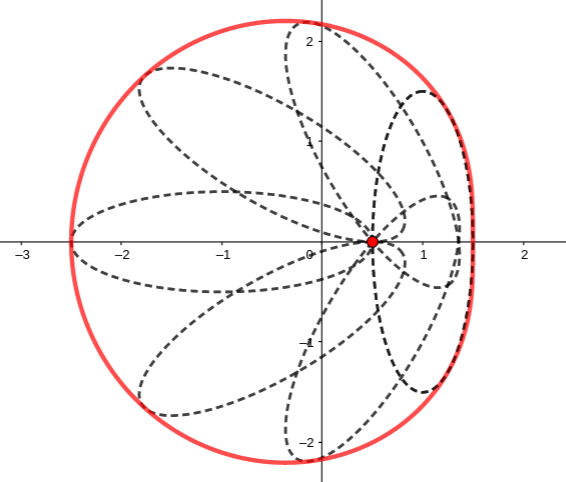}
\caption{The red solid curve represents the points such that $L(1,X,Y)=0$ and the black dotted curves represent the ellipses $H(X,Y;\theta)=0$ for six different values of $\theta$.}
\end{figure}

The complex projective curve 
\[
\left\{ [U; X; Y] \in {\mathbb CP}^2: L(U, X, Y)=0 \right\}
\]
has an ordinary double point at $(U,X,Y)=(2,1,0)$ and a pair of $(2,3)$-cusps at $(U,X,Y)=(0,1, \pm i)$. This curve has no other singular points.
These properties imply that the quartic polynomial $L(U,X,Y)$ is irreducible in the polynomial ring $\C[U,X,Y]$ (see \cite{ChNa2012}). 

Also, we consider the dual curve $\ell(t, x, y)=0$ of the curve $L(U, X, Y)=0$ under the pairing 
\[
t U +x X +y Y =0.
\]
The curve $\ell(t, x, y)=0$ is also an irreducible quartic algebraic curve and a computation shows that $\ell$ is given by
\begin{equation}\label{eq:ell}
\ell(t, x, y)=16 t^4 +32 t^3 x -72 t^2 (x^2 +y^2) -216 t (x^3 +x y^2) -135 x^4 -162 x^2 y^2 -27y^4.
\end{equation}

The irreducibility of $L$ and $\ell$ will be useful when answering the main question of this section: Is  there some $k \times k$ complex matrix $B$ for which 
    \[
    \ovl{W(T_+)} =W(B)?
    \]
The following theorem shows that the answer is negative.

\begin{theorem}\label{th:TnotB}
Let $a^{(0)}$, $a^{(-1)}$ and $a^{(-2)}$ be the constant sequences of zeroes, let  $a^{(1)}=\left(a_j^{(1)}\right)_{j \in \Z}$ be the sequence defined by $a_j^{(1)}=-1$ if $j$ is an even integer and $a_j^{(1)}=2$ if $j$ is an odd integer; and let  $a^{(2)}$ be the constant sequence of ones. Suppose that $T_+$ is the biperiodic and $5$-banded Toeplitz operator acting on $\ell^2(\N_0)$ defined by these sequences, which has a matrix representation given in  \eqref{ex:counter}. For every $k\in \N$, there is no $k \times k$ matrix $B$ such that 
\[
    \ovl{W(T_+)} =W(B).
\]
\end{theorem}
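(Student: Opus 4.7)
The plan is to suppose, for contradiction, that $\overline{W(T_+)}=W(B)$ for some $k \times k$ complex matrix $B$, and to combine Kippenhahn's theorem on the numerical range with the hyperbolicity of the Kippenhahn polynomial to force the irreducible quartic $\ell(t,x,y)$ of \eqref{eq:ell} to divide the Kippenhahn polynomial of $B$. A direct substitution will then show that $\ell$ fails to be hyperbolic in the required direction, producing the desired contradiction.

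First, I would write $B=H+iK$ with $H=(B+B^{*})/2$ and $K=(B-B^{*})/(2i)$ Hermitian, and form the Kippenhahn polynomial
\[
P_B(t,x,y)=\det\left(tI_k+xH+yK\right)\in\C[t,x,y],
\]
a homogeneous polynomial of degree $k$. Because $x_0H+y_0K$ is Hermitian for every $(x_0,y_0)\in\R^2$, every root in $t$ of $P_B(t,x_0,y_0)=0$ is real; that is, $P_B$ is hyperbolic with respect to $(1,0,0)$. By Kippenhahn's theorem, $\partial W(B)$ lies in the real affine part of the dual projective curve of $\{P_B=0\}\subset\mathbb{CP}^2$.

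Next, under the assumption $W(B)=\overline{W(T_+)}$ one has
\[
\partial W(B)=\partial\overline{W(T_+)}=\{(X,Y)\in\R^2:L(1,X,Y)=0\}\setminus\{(1/2,0)\},
\]
which is Zariski-dense in the irreducible quartic $\{L=0\}$. Since the dual of $\{P_B=0\}$ decomposes as the union of the duals of the irreducible components of $\{P_B=0\}$, one such dual component must coincide with the whole curve $\{L=0\}$. Invoking the biduality of irreducible plane curves and the fact (established in the excerpt) that the dual of $L$ is the irreducible polynomial $\ell$, I would conclude that $\ell$ is an irreducible factor of $P_B$ in $\C[t,x,y]$.

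Finally, since hyperbolicity with respect to a fixed direction passes to factors, the divisibility $\ell\mid P_B$ would force $\ell(t,x_0,y_0)=0$ to have four real roots in $t$ for every $(x_0,y_0)\in\R^2$. To refute this, evaluating \eqref{eq:ell} at $(x_0,y_0)=(0,1)$ yields $\ell(t,0,1)=16t^4-72t^2-27$; the substitution $s=t^2$ turns this into $16s^2-72s-27=0$, whose roots $s=(9\pm 6\sqrt{3})/4$ have opposite signs, so only two roots in $t$ are real. The contradiction completes the proof. The most delicate step will be justifying the divisibility $\ell\mid P_B$: it relies on the irreducibility of both $L$ and $\ell$ (carefully established in the excerpt despite the node and the two complex cusps of $L$) together with the biduality $(C^{*})^{*}=C$ for irreducible plane curves.
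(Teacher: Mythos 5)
Your proposal is correct and follows essentially the same route as the paper's proof: both reduce the problem, via Kippenhahn's theorem and the irreducibility of $L$, to showing that $\ell$ would have to be a hyperbolic factor of the Kippenhahn polynomial of $B$, and both refute hyperbolicity by evaluating $\ell$ along the $y$-axis to get $16t^4-72t^2-27$, which has non-real roots. The only cosmetic differences are that the paper invokes B\'ezout's theorem where you use Zariski density plus biduality to get the common component, and it cites a specific reference for the fact that factors of hyperbolic forms are hyperbolic.
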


Before proving this theorem, we introduce a notion which was originally formulated for the principal symbol of a partial differential operator. 

\begin{definition}
Suppose that $F(t, x, y)$ is a real homogeneous polynomial of degree $k \geq 1$. The form $F$ is said to be {\em hyperbolic} with respect to $(1,0,0)$ if for every nonzero $(x_0,y_0) \in \R^2$, the equation
\[
 F(t, x_0, y_0)=0
 \]
 as a polynomial in the variable $t$ has $k$ real solutions counting multiplicities. 
\end{definition}

Observe that if $F_B$ is the Kippenhahn polynomial associated with an $k \times k$ matrix $B$; i.e.,
 \[
 F_{B}(t, x, y)={\rm det}(t I_m +x \Re(B) +y \Im(B)),
 \]
then the equation $F_{B}(t, x, y)=0$ is the characteristic equation of the Hermitian matrix $-x \Re(B) -y \Im(B)$ and hence, for each $(x,y) \in \R^2$, it has $k$ real roots counting  multiplicites. So the form $F_{B}$ is hyperbolic with respect to $(1,0,0)$  (cf. \cite{Kip}). 

\begin{proof}[Proof of Theorem~\ref{th:TnotB}]
By contradiction, we assume that for some $k \in \N$, there exists a $k \times k$ matrix $B$ satisfying  $W(B)=\ovl{W(T_+)}$. 

By Kippenhahn's theorem, the numerical range of a matrix $B$ equals the convex hull of the real affine part of the dual curve of $F_B(t,x,y)=0$. Let us denote by $f_B(U,X,Y)$ the dual curve of $F_B(t,x,y)$. As we have seen, a point $(X_0,Y_0)$ on the boundary of the numerical range of $T_+$ satisfies the equation $L(1,X_0,Y_0)=0$, where $L(U,X,Y)$ is given in \eqref{eq:L}. Then the assumption implies that $f_B(1,X_0,Y_0)=0$. By B\'ezout's Theorem, $L(U,X,Y)$ and $f_B(U,X,Y)$ must have a common component. Since $L(U,X,Y)$ is irreducible, we conclude that $L(U,X,Y)$ is a factor of $f_B(U,X,Y)$. It follows that its dual curve, $\ell(t,x,y)$, described in \eqref{eq:ell}, must be a factor of $F_B(t,x,y)$.

By the relation (3.12) of \cite[p. 130]{ABG}, the ternary form $\ell(t, x, y)$ as a factor of $F_{B}(t, x, y)$ has to be hyperbolic with respect to $(1,0,0)$, that is, the quartic equation $\ell(t, -\cos \theta, -\sin \theta)=0$ in $t$ has $4$ real solutions counting its multiplicities for any angle $0 \leq \theta < 2\pi$. But the equation for $\theta=\pi/2$ is expressed as 
\[
16 t^4 -72 t^2 -27= \left( 4 t^2 -(6\sqrt{3}+9) \right) \,  \left( 4 t^2 +(6 \sqrt{3}-9)\right)=0.
\]
The equation $4 t^2 -(6 \sqrt{3} +9)=0$ has two real solutions, but since $6 \sqrt{3}-9 > 0$, the equation $4 t^2 +(6 \sqrt{3}-9)$ has no real solutions. Hence, the quartic equation $16t^4-72 t^2 -27=0$ in $t$ does not have only real solutions. So the ternary form $\ell(t, x, y)$ is not hyperbolic with respect to $(1,0,0)$. This contradiction implies that our assumption on the existence of  a $k\times k$ matrix $B$ with $W(B)=\ovl{W(T_+)}$ is false. 
\end{proof}

\bibliographystyle{plain}

\bibliography{mybib}{}

\end{document}